\documentclass[preprint]{elsarticle}

\usepackage{amsmath}
\usepackage{amsthm}
\usepackage{amssymb}
\usepackage{latexsym}
\usepackage{textcomp}
\usepackage{mathtools}
\usepackage[T1]{fontenc}
\usepackage[sc]{mathpazo}
\usepackage{enumerate}
\linespread{1.05}
\usepackage{titlesec}
\usepackage[small,it]{caption}
\usepackage{pstricks}
\usepackage{graphicx, pst-plot, pst-node, pst-text, pst-tree, pstricks-add}
\usepackage{color}
\usepackage{calc}
\usepackage{gastex}
\definecolor{lightgray}{rgb}{0.9, 0.9, 0.9}
\definecolor{darkgray}{rgb}{0.7, 0.7, 0.7}
\definecolor{darkblue}{rgb}{0, 0, .4}

\usepackage[bookmarks]{hyperref}
\hypersetup{
    colorlinks=true,
    linkcolor=darkblue,
    anchorcolor=darkblue,
    citecolor=darkblue,
    urlcolor=darkblue,
    pdfpagemode=UseThumbs,
    pdftitle={The enumeration of a pattern class},
    pdfsubject={Combinatorics},
    pdfauthor={M. Albert and R. Brignall},
    pdfkeywords={todo}
}
%-------------------------------
\parindent 0pt
\parskip\medskipamount
\theoremstyle{plain}
\newtheorem{theorem}{Theorem}[section]

\newtheorem{proposition}[theorem]{Proposition}

\theoremstyle{definition}

\theoremstyle{remark}

%-------------------------------

\newcounter{todocounter}

\setlength{\textwidth}{6.4in}
\setlength{\textheight}{8.9in}
\setlength{\topmargin}{0pt}
\setlength{\headsep}{0pt}
\setlength{\headheight}{0pt}
\setlength{\oddsidemargin}{0pt}
\setlength{\evensidemargin}{0pt}
\makeatletter
\newfont{\footsc}{cmcsc10 at 8truept}
\newfont{\footbf}{cmbx10 at 8truept}
\newfont{\footrm}{cmr10 at 10truept}
\pagestyle{plain}

\newcommand{\OEISlink}[1]{\href{http://oeis.org/#1}{#1}}
\newcommand{\OEISref}{\href{http://oeis.org/}{\emph{The On-Line Encyclopedia of Integer Sequences}}}
%{{OEIS}~\cite{sloane:the-on-line-enc:}}
\newcommand{\OEIS}[1]{(Sequence \OEISlink{#1} in the \OEISref.)}

\newcommand{\Av}{\operatorname{Av}}

\newcommand{\A}{\mathcal{A}}

\newcommand{\C}{\mathcal{C}}

\renewcommand{\a}{a}%{\mathbf{a}}
\renewcommand{\b}{b}%{\mathbf{b}}
\renewcommand{\c}{c}%{\mathbf{c}}
\renewcommand{\d}{d}%{\mathbf{d}}

%--------------------------------
%Grid class drawing macros
%\def\numlines[1]{#1 add 1}

% Four counters for us to use:
\newcounter{hone}
\newcounter{htwo}
\newcounter{vone}
\newcounter{vtwo}

% Draws a series of (#1+1) x (#2+1) vertical lines, forming a grid, each 20 units square
\def\grid(#1,#2){%
\setcounter{hone}{#1*20}%
\setcounter{vone}{#2*20}%
\psset{xunit=0.005in, yunit=0.005in} \psset{linewidth=0.005in}%
\begin{pspicture}(0,0)(\thehone,\thevone)\drawlines(#1,#2)\end{pspicture}}

\def\drawlines(#1,#2){%
\setcounter{hone}{#1}%
\setcounter{vtwo}{20*\thehone}%
\addtocounter{hone}{1}%
\setcounter{vone}{#2}%
\setcounter{htwo}{20*\thevone}%
\addtocounter{vone}{1}%
\multido{\i=0+20}{\thehone}{\psline[linestyle=solid,linewidth=0.005in](\i,0)(\i,\thehtwo)}%
\multido{\i=0+20}{\thevone}{\psline[linestyle=solid,linewidth=0.005in](0,\i)(\thevtwo,\i)}}

\def\upline(#1,#2){%
\setcounter{hone}{(#1*20)-17}%
\setcounter{vone}{(#2*20)-17}%
\setcounter{htwo}{(#1*20)-3}%
\setcounter{vtwo}{(#2*20)-3}%
\psline[linestyle=solid,linewidth=0.02in](\thehone,\thevone)(\thehtwo,\thevtwo)}

\def\downline(#1,#2){%
\setcounter{hone}{(#1*20)-17}%
\setcounter{vtwo}{(#2*20)-17}%
\setcounter{htwo}{(#1*20)-3}%
\setcounter{vone}{(#2*20)-3}%
\psline[linestyle=solid,linewidth=0.02in](\thehone,\thevone)(\thehtwo,\thevtwo)}

\def\cellclass(#1,#2)#3{%
\setcounter{hone}{(#1*20)-10}%
\setcounter{vone}{(#2*20)-10}%
\rput[c](\thehone,\thevone){#3}}

%\title{Enumerating indices of Schubert varieties defined by inclusions}
%\author[mha]{Michael H.~Albert}
%\ead{michael.albert@cs.otago.ac.nz}
% %\thanks{{\tt michael.albert@cs.otago.ac.nz}},
%Department of Computer Science\\
%University of Otago,  New Zealand\\
%and\\
%\author[rb]{Robert Brignall}
%\ead{r.brignall@open.ac.uk}
%Department of Mathematics and Statistics\\
%The Open University, UK}

%--------------------------------
\begin{document}

\begin{frontmatter}

\title{Enumerating indices of Schubert varieties defined by inclusions}

\author{Michael H.~Albert}
%\ead{michael.albert@cs.otago.ac.nz}
\address{Department of Computer Science, University of Otago, New Zealand}

\author{Robert Brignall}
%\ead{r.brignall@open.ac.uk}
\address{Department of Mathematics and Statistics, The Open University, UK}

\begin{abstract}
By extending the notion of grid classes to include infinite grids, we establish a structural characterisation of the simple permutations in Av(4231, 35142, 42513, 351624), a pattern class which has three different connections with algebraic geometry, including the specification of indices of Schubert varieties defined by inclusions. This characterisation leads to the enumeration of the class.
\end{abstract}

\end{frontmatter}

\section{Introduction}

Pattern classes, or permutation classes, are collections of permutations closed under taking subpermutations in a natural way (detailed definitions will be given in the next section). They have arisen in a number of different contexts within mathematics and theoretical computer science. One area where they appear is in the categorization of Schubert varieties having certain properties. A Schubert variety $X_w$ is determined by a permutation $w$, called its index. Certain properties of the variety are determined by patterns that the index $w$ avoids. For instance, Lakshmibai and Sandhya \cite{lakshmibai:criterion-for-smoothness:} showed that a Schubert variety is smooth if and only if its index contains neither 3412 nor 4231 as a subpermutation.

A larger class of Schubert varieties are those defined by inclusions. Gasharov and Reiner \cite{gasharov:cohomology:} proved that these are indexed by permutations not containing any of 4231, 35142, 42513, or 351624. As it turns out, the permutations that avoid these four patterns arise in two other unrelated contexts. First, Sj{\"o}strand \cite{sjostrand:bruhat:} established that this same avoidance condition characterizes permutations whose right hull\footnote{Roughly speaking, the \emph{right hull} of a permutation is the smallest right-aligned skew-Ferrers diagram that covers it.} covers exactly the elements below it in the Bruhat order. Second, Hultman, Linusson, Shareshian, and Sj{\"o}strand \cite{hultman:from-bruhat:} proved that the number of elements in the Bruhat order below a permutation is equal to the number of regions in the associated `inversion hyperplane arrangement' if and only if the permutation avoids these four patterns. In~\cite{hultman:from-bruhat:}, the authors observe that there seems to be no direct reason why the same avoidance conditions appear in three different settings, and indeed pose this as an open question. While we do not broach this question here, it is reasonable to expect that our results could provide a route forward by supplying a more concrete description of permutations that avoid these patterns.

For any given pattern class it is natural to ask what its enumeration is, i.e.~how many permutations of each length belong to it. More generally, one might hope to describe the structure of the permutations belonging to the class in some less abstract way than the negative criteria ``permutations not containing certain subpermutations''. In this paper we will carry out this program for the class mentioned above.

The enumeration of this class was raised as a problem at FPSAC 2008 and a (now known to be correct) conjecture concerning their enumeration was put by the first author based purely on numerical evidence. At the Permutation Patterns 2012 conference, Alexander Woo again raised the issue of the enumeration of this class, and we thank him for drawing our attention back to it. In recent years a variety of more general enumerative techniques for pattern classes, based on regular languages, \emph{simple permutations} and \emph{grid representations} have been developed. Though these do not apply directly to the problem at hand, we used and modified the underlying ideas to produce the structural characterisation and enumeration which we will demonstrate here.

\section{Terminology and methodology}

Throughout this paper permutations are written in one line notation i.e.~considered as sequences of positive integers, and are denoted by lower case Greek letters. The \emph{pattern} of a sequence of distinct positive integers is the unique permutation of the same length whose elements are in the same relative order. For instance the pattern of $47183$ is $34152$. We say that $\sigma$ is a \emph{subpermutation} of, or a \emph{pattern} in $\pi$ if $\pi$ has a subsequence with pattern $\sigma$ and in this case we write $\sigma \leq \pi$. If $\sigma$ is not a subpermutation of $\pi$ we say that $\pi$ \emph{avoids} $\sigma$. The number of elements in the domain of $\pi$ will be called its length and denoted $|\pi|$.

%We say that $\sigma$ is a \emph{subpermutation} of, or a \emph{pattern} in $\pi$ if $\sigma = \sigma_1 \sigma_2 \dots \sigma_k$ and for some $i_1 < i_2 < \dots < i_k$, and all $1 \leq s, t \leq k$, $\sigma_s < \sigma_t$ if and only if $\pi_{i_s} < \pi_{i_t}$. That is, $\pi$ contains a subsequence of the same length as $\sigma$ such that the relative order of terms in this subsequence is the same as the relative order of the corresponding terms of $\sigma$. In this case we write $\sigma \leq \pi$. If $\sigma$ is not a subpermutation of $\pi$ we say that $\pi$ \emph{avoids} $\sigma$. The \emph{pattern} of a sequence of distinct positive integers is the unique permutation of the same length whose terms are in the same relative order. For instance the pattern of $47183$ is $34152$. For our purposes, the \emph{length} of a permutation, $\pi$, is the number of elements of its domain, and is denoted $|\pi|$.

A \emph{pattern class}, $\C$, is any set of permutations with the property that if $\pi \in \C$ and $\sigma \leq \pi$, then $\sigma \in \C$. For any proper pattern class $\C$ we define its \emph{basis}, $X$, to be the minimal elements of the set of permutations  (with respect to the ordering $\leq$) not belonging to $\C$. Then $\pi \in \C$ if and only if $\pi$ avoids every permutation in $X$ and we write $\C = \Av(X)$. Conversely, given any antichain $X$ of permutations, $X$ is the basis of the class $\Av(X)$.

Throughout this paper we will be working with the class $\A = \Av(4231, 35142, 42513, 351624)$ (we omit superfluous braces). Since the basis of this class is fixed as a set after taking inverses, so is the class itself  as it is easy to check that $\sigma$ is a pattern in $\pi$ if and only if $\sigma^{-1}$ is a pattern in $\pi^{-1}$. This class also has a second proper symmetry: \emph{reverse complement}. The reverse complement $\pi^{rc}$ of a permutation $\pi$ is found by reversing the sequence that represents $\pi$ and then replacing $i$ by $|\pi|+1-i$ for each $i$. Again, it is easy to check that $\sigma \leq \pi$ if and only if $\sigma^{rc} \leq \pi^{rc}$ and the basis of $\A$ is fixed as a set by this operation. Because of these symmetries we can frequently reduce the number of cases that need to be considered in the arguments to follow.

A permutation is \emph{simple} if the only intervals that it maps to intervals are singletons and its entire domain. Equivalently, the corresponding sequence contains no factors whose elements form an interval except for singletons and the entire sequence. So for instance $479683152$ is not simple because of the  factor $7968$, while $2413$ is simple. Given a permutation $\pi$ of length $n$ and a sequence of permutations $\sigma_1, \sigma_2, \dots, \sigma_n$, we define the \emph{inflation}, $\pi[\sigma_1, \sigma_2, \dots, \sigma_n]$, to be the permutation of length $|\sigma_1| + |\sigma_2| + \dots + |\sigma_n|$ which is obtained from $\pi$ by replacing each element $\pi(i)$ by an interval of pattern $\sigma_i$ in such a way that the relative ordering between any two elements of the sequences introduced in place of $\pi(i)$ and $\pi(j)$ is the same as that between $\pi(i)$ and $\pi(j)$ themselves. For instance:
\[
2413[1, 21, 3142, 1] = 5 \, 87 \, 3142 \, 6.
\]

In general any permutation, $\pi$, is the inflation of a unique simple permutation called its \emph{skeleton}. If the skeleton has length greater than 2 then the permutations by which it is inflated are also uniquely determined by $\pi$. However, if the skeleton is 12 (in which case we say that $\pi$ is \emph{sum decomposable}) or $21$ (\emph{skew decomposable}) then the inflation is not unique -- for example $123 = 12[1,12] = 12[12,1]$. Uniqueness is generally enforced in this case by insisting that the first of the permutations in the inflation be indecomposable (i.e.~we would favour the first of the two alternatives for 123).

As a general strategy in trying to enumerate a pattern class we can first attempt to enumerate its simple permutations, and then describe how these can be inflated (often handling sum and skew decompositions as a special case). In some vague sense the simple permutations of a class capture its essential character and are frequently far less numerous and more easily described than the class as a whole. For example, letting $s_n$ denote the number of simple permutations of length $n$ in $\A$ we have:
\begin{center}
\begin{tabular}{c|cccccc}
$n$ & 4 & 5 & 6 & 7 & 8 & 9 \\ \hline
$s_n$ & 2 & 4 & 14 & 40 & 122 & 364
\end{tabular}
\end{center}
From this sequence it is easy to form the conjecture that $s_{n+1} = 3 s_n + 2(-1)^{n+1}$. Such a straightforward formula suggests that the strategy specified above has good chance of success.

The \emph{extreme pattern} of a permutation is the pattern defined by its first, last, greatest and least elements. A simple permutation of length greater than 2 can neither begin nor end with its greatest or least elements (otherwise the remaining elements would form a proper, non-singleton, interval) and so its extreme pattern must be one of 2143, 2413, 3142, 3412.

In the next section we give a structural characterisation of the simple permutations in $\A$. In turn this leads to their description by means of a regular language, the description of their inflations, and finally the generating function for $\A$ itself using the standard toolkit of symbolic combinatorics.

\section{Characterising simple permutations in $\A$}

\begin{proposition}
No simple permutation in $\A$ has extreme pattern $3412$.
\end{proposition}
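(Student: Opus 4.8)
The plan is to argue by contradiction: suppose $\pi\in\A$ is simple and has extreme pattern $3412$. Write $a,M,m,d$ for the first, greatest, least and last entries of $\pi$, identifying each entry with its value. Since the extreme pattern is $3412$, these four entries occur from left to right in the order $a,M,m,d$ and their values satisfy $m<d<a<M$; thus $M$ precedes $m$ and $a$ sits above $d$. As $3412$ is itself not simple, $|\pi|\ge 5$. Throughout I would use that $\A$ is closed under reverse complement and that this symmetry fixes $3412$, so every assertion about the top-left of the picture has a free mirror assertion about the bottom-right.

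First I would \emph{box in} the configuration using the basis. If some entry lay both to the right of $m$ and above $a$, then with $a,M,m,d$ it would form a copy of $35142$; hence every entry right of $m$ lies below $a$. Dually (the reverse-complement image, and also a direct copy of $42513$ obtained by inserting between $a$ and $M$ an entry valued strictly between $m$ and $d$), every entry left of $M$ lies above $d$. A third use of the same idea with $4231$ shows that any two entries lying left of $m$ with values below $a$ must be decreasing, and its mirror gives the analogous statement right of $M$.

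Combining these facts is the second step. Using the three monotonicity statements together with one further copy of $35142$ to compare a far-left entry with a far-right one, I would prove that all entries whose values lie in the closed band $[d,a]$ occur in strictly decreasing order of value from left to right. Since every value between $d$ and $a$ is present, they read $a,a-1,\dots,d$ across the diagram, starting with $a$ in position $1$ and ending with $d$ in the last position. Because two consecutive values in adjacent positions form an interval of size $2$, simplicity forces a \emph{separator} of value above $a$ or below $d$ between each consecutive pair of this run; by the boxing step every high separator lies left of $m$ and every low separator lies right of $M$. In particular the entry immediately after $a$ must be a high separator and the entry immediately before $d$ a low separator; this already rules out $d=2$, since it would place $m$ next to $d$ as an interval $\{1,2\}$, so one may assume $d\ge 3$ and, dually, $a\le|\pi|-2$.

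The main obstacle is the endgame: showing that no arrangement of the separators and the band is simultaneously free of intervals and of $4231$ and $351624$. I expect to close it by working inward from the two ends, using the monotonicity forced by $4231$ on the separators to show that either two values differing by one fall into adjacent positions (an interval), or else some separator together with $M$, $m$ and two band entries completes a copy of $4231$ or $351624$. The rigidity of the forced run $a,a-1,\dots,d$ and the reverse-complement symmetry are what I expect to keep this case analysis finite and short.
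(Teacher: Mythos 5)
Your structural reductions are correct and can all be verified: the two ``boxing'' facts (every entry right of the minimum lies below the first entry; every entry left of the maximum lies above the last entry), the two monotonicity facts from 4231, and the extra 35142 application really do force all values in the band $[d,a]$ to appear in strictly decreasing order, stretching from position $1$ to position $n$, with a high separator in position $2$ and a low separator in position $n-1$. But the proof stops there: the contradiction is never actually derived, and the plan you sketch for deriving it does not hold up. In particular, ``the monotonicity forced by 4231 on the separators'' is not established and is false as stated for the high separators: two high entries in descending order, together with the first, least and last entries, form the pattern $35412$, which contains none of the four basis elements, so 4231 alone does not make the high separators monotone. Since the entire contradiction is supposed to emerge from this endgame, this is a genuine gap.

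The gap is closable from exactly the structure you built, though not by the route you describe. One further application of 4231 shows that no low entry can precede a high entry: if a low entry $u$ preceded a high entry $h$, then $M,u,h,m$ (in that positional order, using that $u$ lies right of $M$ and $h$ lies left of $m$) would be a copy of 4231. Hence, letting $P$ be the position of the rightmost high entry, every entry after position $P$ is either a low entry or a band entry; the band entries occurring there are the smallest ones, $\{d,\dots,j\}$ for some $j<a$ (since the band decreases and $a$ sits in position $1\le P$), and all low entries $\{1,\dots,d-1\}$ occur there. So positions $P+1,\dots,n$ carry exactly the values $\{1,\dots,j\}$, a proper non-singleton interval, contradicting simplicity. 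For comparison, the paper's proof needs none of this global structure: it splits the rectangle spanned by the first and greatest entries by an extremal element $x$, splits the resulting box by an extremal element $y$, and observes that the box bounded by these three points is then an unsplittable interval --- three local steps in all.
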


\begin{proof}
Suppose that $\pi \in \A$ were simple and had extreme pattern 3412, specifically $cdab$. This situation is illustrated below. In this and all subsequent illustrations the grid as a whole represents the area in which the graph of the permutation can be drawn, some of its elements will be represented by dots and possibly named, and a shaded cell contains no elements, either because such an element would create a forbidden pattern (dark grey), or because we have made specific choices to ensure that this is so (light grey). \\
\centerline{
%ToPStricks output
\psset{xunit=2pt, yunit=2pt, runit=1.5pt}
\begin{pspicture}(0,0)(50,50)
%Forbidden regions
\pspolygon*[linecolor=darkgray](10,10)(20,10)(20,20)(10,20)
\pspolygon*[linecolor=darkgray](30,30)(40,30)(40,40)(30,40)
%Points
\pscircle*[linecolor=black](10,30){2.0}
\uput[180](10,30){$c$}
\pscircle*[linecolor=black](20,40){2.0}
\uput[90](20,40){$d$}
\pscircle*[linecolor=black](30,10){2.0}
\uput[-90](30,10){$a$}
\pscircle*[linecolor=black](40,20){2.0}
\uput[0](40,20){$b$}
%Gridlines
\psline(10,10)(10,40)
\psline(10,10)(40,10)
\psline(20,10)(20,40)
\psline(10,20)(40,20)
\psline(30,10)(30,40)
\psline(10,30)(40,30)
\psline(40,10)(40,40)
\psline(10,40)(40,40)
\psline(40,10)(40,40)
\end{pspicture}
}
Remembering that $\pi$ is supposed to be simple, either there must be some element lying between $c$ and $d$ which is smaller than $c$ or some later element whose value lies between $c$ and $d$, or both. We say that ``the interval by position and value between $c$ and $d$ must be split''. More generally in a simple permutation we can take the rectangular region bounded by two or more specified elements and demand that it be split by some new specified element if it has not already been split by some other specified element.

To split the rectangular region defined by the elements $c$ and $d$ requires the presence of an element $x$ either in the square below it, or to its right. The two cases are similar (in fact symmetric under the symmetry ``inverse followed by reverse complement'') and we consider only the first. Take as $x$ the least such element. \\
\centerline{
%ToPStricks output
\psset{xunit=2pt, yunit=2pt, runit=1.5pt}
\begin{pspicture}(0,0)(60,60)
%Forbidden regions
\pspolygon*[linecolor=darkgray](0,50)(10,50)(10,50)(0,50)
\pspolygon*[linecolor=darkgray](10,10)(20,10)(20,20)(10,20)
\pspolygon*[linecolor=darkgray](10,20)(20,20)(20,30)(10,30)
\pspolygon*[linecolor=darkgray](10,50)(20,50)(20,50)(10,50)
\pspolygon*[linecolor=darkgray](20,10)(30,10)(30,20)(20,20)
\pspolygon*[linecolor=lightgray](20,20)(30,20)(30,30)(20,30)
\pspolygon*[linecolor=darkgray](20,30)(30,30)(30,40)(20,40)
\pspolygon*[linecolor=darkgray](30,30)(40,30)(40,40)(30,40)
\pspolygon*[linecolor=darkgray](40,30)(50,30)(50,40)(40,40)
\pspolygon*[linecolor=darkgray](40,40)(50,40)(50,50)(40,50)
\pspolygon*[linecolor=darkgray](50,0)(50,0)(50,10)(50,10)
\pspolygon*[linecolor=darkgray](50,30)(50,30)(50,40)(50,40)
%Points
\pscircle*[linecolor=black](10,40){2.0}
\uput[180](10,40){$c$}
\pscircle*[linecolor=black](20,30){2.0}
\uput[135](20,30){$x$}
\pscircle*[linecolor=black](30,50){2.0}
\pscircle*[linecolor=black](40,10){2.0}
\pscircle*[linecolor=black](50,20){2.0}
%Gridlines
\psline(10,10)(10,50)
\psline(10,10)(50,10)
\psline(20,10)(20,50)
\psline(10,20)(50,20)
\psline(30,10)(30,50)
\psline(10,30)(50,30)
\psline(40,10)(40,50)
\psline(10,40)(50,40)
\psline(50,10)(50,50)
\psline(10,50)(50,50)
\psline(50,10)(50,50)
\psline(10,50)(50,50)
\end{pspicture}
}
Splitting the box defined by $\{c,x\}$ requires another element $y$ in the top left square. Take the largest such element. \\
\centerline{
%ToPStricks output
\psset{xunit=2pt, yunit=2pt, runit=2pt}
\begin{pspicture}(0,0)(70,70)
%Forbidden regions
\pspolygon*[linecolor=darkgray](10,10)(20,10)(20,20)(10,20)
\pspolygon*[linecolor=darkgray](10,20)(20,20)(20,30)(10,30)
\pspolygon*[linecolor=lightgray](10,50)(20,50)(20,60)(10,60)
\pspolygon*[linecolor=darkgray](20,10)(30,10)(30,20)(20,20)
\pspolygon*[linecolor=darkgray](20,20)(30,20)(30,30)(20,30)
\pspolygon*[linecolor=lightgray](20,50)(30,50)(30,60)(20,60)
\pspolygon*[linecolor=darkgray](30,10)(40,10)(40,20)(30,20)
\pspolygon*[linecolor=lightgray](30,20)(40,20)(40,30)(30,30)
\pspolygon*[linecolor=darkgray](30,30)(40,30)(40,40)(30,40)
\pspolygon*[linecolor=darkgray](30,40)(40,40)(40,50)(30,50)
\pspolygon*[linecolor=darkgray](40,30)(50,30)(50,40)(40,40)
\pspolygon*[linecolor=darkgray](40,40)(50,40)(50,50)(40,50)
\pspolygon*[linecolor=darkgray](50,30)(60,30)(60,40)(50,40)
\pspolygon*[linecolor=darkgray](50,40)(60,40)(60,50)(50,50)
\pspolygon*[linecolor=darkgray](50,50)(60,50)(60,60)(50,60)
%Points
\pscircle*[linecolor=black](10,40){2.0}
\uput[180](10,40){$c$}
\pscircle*[linecolor=black](20,50){2.0}
\uput[-45](20,50){$y$}
\pscircle*[linecolor=black](30,30){2.0}
\uput[135](30,30){$x$}
\pscircle*[linecolor=black](40,60){2.0}
\pscircle*[linecolor=black](50,10){2.0}
\pscircle*[linecolor=black](60,20){2.0}
%Gridlines
\psline(10,	10)(10,60)
\psline(10,10)(60,10)
\psline(20,10)(20,60)
\psline(10,20)(60,20)
\psline(30,10)(30,60)
\psline(10,30)(60,30)
\psline(40,10)(40,60)
\psline(10,40)(60,40)
\psline(50,10)(50,60)
\psline(10,50)(60,50)
\psline(60,10)(60,60)
\psline(10,60)(60,60)
\end{pspicture}
}
Now the box defined by $\{c,x,y\}$ defines an interval which is unsplit and unsplittable, and hence a contradiction that establishes the result.
\end{proof}

The technique used above, either to establish that some configuration is impossible or to impose certain structure within a simple permutation of $\A$ will be used throughout and henceforth we shall not give those arguments in such great detail.

\begin{proposition}
\label{Prop_2413}
If the extreme points of a simple permutation $\pi$ in $\A$ have the pattern 2413, say as $bdac$, then the permutation is $N$-shaped, consisting of an increasing segment from $b$ to $d$, a decreasing one from $d$ to $a$ and an increasing one from $a$ to $c$.
\end{proposition}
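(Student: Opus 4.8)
The plan is to work in the grid picture, as in the previous proposition. Write the four extreme points as $b$ (leftmost), $d$ (the global maximum), $a$ (the global minimum) and $c$ (rightmost); since the extreme pattern is $bdac\sim 2413$ we have value order $a<b<c<d$ and position order $b,d,a,c$. I would partition the remaining points by position into three columns determined by $d$ and $a$: column $L$ (positions between $b$ and $d$), column $M$ (between $d$ and $a$), and column $R$ (between $a$ and $c$). A short check shows that the asserted $N$-shape is equivalent to the conjunction of five statements: column $M$ is decreasing, columns $L$ and $R$ are increasing, no point of column $L$ lies below $b$, and no point of column $R$ lies above $c$. (Given these, $b$ together with column $L$ and $d$ is a single increasing run, $d$–column $M$–$a$ a single decreasing run, and $a$–column $R$–$c$ a single increasing run, which is exactly the $N$.)

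The first of these is immediate and needs nothing beyond the basis: if two points $m_1,m_2$ of column $M$ formed an ascent, then $d,m_1,m_2,a$ would realise $4231$, since $a<m_1<m_2<d$ with $d$ leftmost and $a$ rightmost. Hence column $M$ is decreasing. For the rest I would invoke the reverse-complement symmetry. One checks that reverse complement fixes the $2413$ configuration, swapping $b\leftrightarrow c$ and $d\leftrightarrow a$, interchanging columns $L$ and $R$ and the top and bottom value bands, and preserving monotonicity. Consequently it suffices to prove the two right-hand statements, namely that column $R$ is increasing and contains no point above $c$; the two left-hand statements then follow by symmetry.

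For the right-hand statements I would use simplicity via the box-splitting technique, because the basis alone is not enough: a single stray point does not violate any forbidden pattern (for instance $b,d,a,w,c$ with $w$ above $c$ in column $R$ has pattern $25143\in\A$, which is consistent but not simple). So I would assume a violating point — either a descent in column $R$ or a point $w$ in column $R$ with value above $c$ — and argue that the rectangle it determines with the neighbouring extreme points cannot be an interval of the simple permutation $\pi$, so it must be split by a further point. The productive mechanism is that these splitting points, placed relative to $b,d,a,c$, are forced into the top band where they generate a forbidden relation: e.g.\ a point $z$ in column $L$ above $c$ lying below such a $w$ gives $z,d,a,w,c\sim 35142$, and its mirror image gives the $42513$ relation on the other side. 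I would run this splitting argument to completion, choosing the splitting point extremally (largest or smallest available, as in the $3412$ case) to keep the induction under control.

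\textbf{Main obstacle.} The hard part is exactly this last step: the casework tracking where each forced splitting point can land and which basis element it activates. Unlike column $M$, the side columns require genuine use of simplicity, and the splitting can cascade through several points before a contradiction appears; I expect the length-six relation $351624$ (which is itself reverse-complement invariant) to be needed for the longest such cascades, with $35142$ and $42513$ disposing of the shorter ones. Organising these cases so that every placement of a splitting point is ruled out — rather than merely the ``obvious'' ones — is where the real bookkeeping lies, and is presumably why the authors announce that henceforth such splitting arguments will not be spelled out in full detail.
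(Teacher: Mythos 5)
Your reduction is sound and matches the paper's strategy: the middle column is forced decreasing by $4231$ alone (your $d,m_1,m_2,a$ argument is exactly the paper's), reverse complement correctly swaps $b\leftrightarrow c$, $d\leftrightarrow a$ and the two side columns, and the side columns do indeed require simplicity rather than just the basis. But the proposal stops exactly where the proof has to be done: you announce the box-splitting argument for a side column, give one sample forbidden pattern ($z,d,a,w,c\sim 35142$), and then file the completion under ``main obstacle.'' That completion is the content of the proposition, so as written this is a genuine gap, not a finished proof.

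For what it is worth, the missing step is shorter than you fear: there is no cascade, and $351624$ is not needed here. The paper (working on the left side; your right side is the mirror image) argues in two moves each time. For a point $x$ of column $L$ below $b$, take the \emph{least} such $x$; one checks that every region that could split the box $\{b,x\}$ except ``between $b$ and $x$ in position, above $b$ in value'' is excluded (by the choice of $x$, by $b$ being leftmost, or by a basis element involving $d$, $a$, $c$); taking the \emph{largest} splitter $y$ in that one remaining region makes $\{b,x,y\}$ an unsplittable proper interval, contradicting simplicity. For an inversion in column $L$, choose it with greatest top and least bottom; its box can be split only to the left, and the leftmost such splitter again yields a three-point interval. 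So the bookkeeping you defer is a fixed, finite check of which cells around a two-point box are killed by which basis element, followed by the observation that the unique surviving cell produces an interval after one more extremal choice --- precisely the template of the $3412$ proposition you cite. Until you exhibit that check (on your chosen side: a point of column $R$ above $c$, and a descent in column $R$), the proof is incomplete.
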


\begin{proof}
First we establish that there is no element $x$ below $b$ and to the left of $d$. Otherwise, we could take the least such. The potential interval bounded by $\{b,x\}$ can only be split between $b$ and $x$ and above $b$. Let $y$ be the largest such split. Then the box bounded by $\{b,x,y\}$ is an interval.

So all elements lying between $b$ and $d$ lie above $b$. Suppose that this set of elements is not monotone increasing. Choose an inversion $yx$ in this interval with greatest possible top and least possible bottom (for that top). Now the box defined by $\{x,y\}$ can only be split to its left. Taking the leftmost such split, $z$ yields the picture below, in which the box bounded by $\{x,y,z\}$ is an interval.

\centerline{
%ToPStricks output
\psset{xunit=2pt, yunit=2pt, runit=2pt}
\begin{pspicture}(0,0)(80,80)
%Forbidden regions
%\pspolygon*[linecolor=lightgray](0,0)(10,0)(10,10)(0,10)
%\pspolygon*[linecolor=lightgray](0,10)(10,10)(10,20)(0,20)
%\pspolygon*[linecolor=lightgray](0,20)(10,20)(10,30)(0,30)
%\pspolygon*[linecolor=darkgray](0,30)(10,30)(10,40)(0,40)
%\pspolygon*[linecolor=darkgray](0,40)(10,40)(10,50)(0,50)
%\pspolygon*[linecolor=darkgray](0,50)(10,50)(10,60)(0,60)
%\pspolygon*[linecolor=darkgray](0,60)(10,60)(10,70)(0,70)
%\pspolygon*[linecolor=darkgray](0,70)(10,70)(10,80)(0,80)
%\pspolygon*[linecolor=lightgray](10,0)(20,0)(20,10)(10,10)
\pspolygon*[linecolor=lightgray](10,10)(20,10)(20,20)(10,20)
\pspolygon*[linecolor=lightgray](10,40)(20,40)(20,50)(10,50)
\pspolygon*[linecolor=lightgray](10,50)(20,50)(20,60)(10,60)
\pspolygon*[linecolor=darkgray](10,60)(20,60)(20,70)(10,70)
%\pspolygon*[linecolor=darkgray](10,70)(20,70)(20,80)(10,80)
%\pspolygon*[linecolor=darkgray](20,0)(30,0)(30,10)(20,10)
\pspolygon*[linecolor=darkgray](20,10)(30,10)(30,20)(20,20)
\pspolygon*[linecolor=darkgray](20,20)(30,20)(30,30)(20,30)
\pspolygon*[linecolor=darkgray](20,30)(30,30)(30,40)(20,40)
\pspolygon*[linecolor=lightgray](20,60)(30,60)(30,70)(20,70)
%\pspolygon*[linecolor=darkgray](20,70)(30,70)(30,80)(20,80)
%\pspolygon*[linecolor=darkgray](30,0)(40,0)(40,10)(30,10)
\pspolygon*[linecolor=darkgray](30,10)(40,10)(40,20)(30,20)
\pspolygon*[linecolor=darkgray](30,20)(40,20)(40,30)(30,30)
\pspolygon*[linecolor=darkgray](30,30)(40,30)(40,40)(30,40)
\pspolygon*[linecolor=lightgray](30,60)(40,60)(40,70)(30,70)
%\pspolygon*[linecolor=darkgray](30,70)(40,70)(40,80)(30,80)
%\pspolygon*[linecolor=darkgray](40,0)(50,0)(50,10)(40,10)
\pspolygon*[linecolor=darkgray](40,10)(50,10)(50,20)(40,20)
\pspolygon*[linecolor=darkgray](40,20)(50,20)(50,30)(40,30)
\pspolygon*[linecolor=lightgray](40,30)(50,30)(50,40)(40,40)
\pspolygon*[linecolor=darkgray](40,40)(50,40)(50,50)(40,50)
\pspolygon*[linecolor=darkgray](40,50)(50,50)(50,60)(40,60)
%\pspolygon*[linecolor=lightgray](40,70)(50,70)(50,80)(40,80)
%\pspolygon*[linecolor=lightgray](50,0)(60,0)(60,10)(50,10)
\pspolygon*[linecolor=darkgray](50,40)(60,40)(60,50)(50,50)
\pspolygon*[linecolor=darkgray](50,50)(60,50)(60,60)(50,60)
%\pspolygon*[linecolor=lightgray](50,70)(60,70)(60,80)(50,80)
%\pspolygon*[linecolor=lightgray](60,0)(70,0)(70,10)(60,10)
\pspolygon*[linecolor=darkgray](60,40)(70,40)(70,50)(60,50)
\pspolygon*[linecolor=darkgray](60,50)(70,50)(70,60)(60,60)
\pspolygon*[linecolor=darkgray](60,60)(70,60)(70,70)(60,70)
%\pspolygon*[linecolor=lightgray](60,70)(70,70)(70,80)(60,80)
%\pspolygon*[linecolor=darkgray](70,0)(80,0)(80,10)(70,10)
%\pspolygon*[linecolor=darkgray](70,10)(80,10)(80,20)(70,20)
%\pspolygon*[linecolor=lightgray](70,20)(80,20)(80,30)(70,30)
%\pspolygon*[linecolor=lightgray](70,30)(80,30)(80,40)(70,40)
%\pspolygon*[linecolor=darkgray](70,40)(80,40)(80,50)(70,50)
%\pspolygon*[linecolor=darkgray](70,50)(80,50)(80,60)(70,60)
%\pspolygon*[linecolor=lightgray](70,60)(80,60)(80,70)(70,70)
%\pspolygon*[linecolor=lightgray](70,70)(80,70)(80,80)(70,80)
%Points
\pscircle*[linecolor=black](10,20){2.0}
\uput[-135](10,20){$b$}
\pscircle*[linecolor=black](20,50){2.0}
\uput[-45](20,50){$z$}
\pscircle*[linecolor=black](30,60){2.0}
\uput[-45](30,60){$y$}
\pscircle*[linecolor=black](40,40){2.0}
\uput[-45](40,40){$x$}
\pscircle*[linecolor=black](50,70){2.0}
\uput[45](50,70){$d$}
\pscircle*[linecolor=black](60,10){2.0}
\uput[-45](60,10){$a$}
\pscircle*[linecolor=black](70,30){2.0}
\uput[-45](70,30){$c$}
%Gridlines
%\psline(0,0)(0,80)
%\psline(0,0)(80,0)
\psline(10,10)(10,70)
\psline(10,10)(70,10)
\psline(20,10)(20,70)
\psline(10,20)(70,20)
\psline(30,10)(30,70)
\psline(10,30)(70,30)
\psline(40,10)(40,70)
\psline(10,40)(70,40)
\psline(50,10)(50,70)
\psline(10,50)(70,50)
\psline(60,10)(60,70)
\psline(10,60)(70,60)
\psline(70,10)(70,70)
\psline(10,70)(70,70)
%\psline(80,0)(80,80)
%\psline(0,80)(80,80)
\end{pspicture}
}

The segment from $d$ to $a$ is decreasing by 4231-avoidance, and that from $a$ to $c$ is increasing by the reverse complement symmetry of the argument used in the first part.
\end{proof}

Using the inverse symmetry for $\A$ we also obtain:

\begin{proposition}
If the extreme points of a simple permutation $\pi$ in $\A$ have the pattern 3142, say as $cadb$ then the permutation is $S$ shaped: the values between $a$ and $b$ forming an increasing sequence, those between $b$ and $c$ a decreasing one, and those between $c$ and $d$ an increasing one.
\end{proposition}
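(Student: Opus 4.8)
The plan is to obtain this proposition directly from Proposition~\ref{Prop_2413} via the inverse symmetry of $\A$, so that none of the splitting arguments need be repeated. First I would record the combinatorial identity $2413^{-1} = 3142$, which one reads off by locating the values $1,2,3,4$ within $2413$. Since $\A$ is closed under inverse, and since inversion carries intervals to intervals (consecutive positions and consecutive values swap roles, so simple permutations go to simple permutations), the permutation $\tau = \pi^{-1}$ is again a simple element of $\A$, and its extreme pattern is $2413$. Hence Proposition~\ref{Prop_2413} applies to $\tau$, telling us that $\tau$ is $N$-shaped.

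The next step is to track the four extreme points under inversion, which is reflection of the plot across the main diagonal and therefore interchanges position with value. Concretely, the least element of $\pi$ becomes the leftmost point of $\tau$, the rightmost point of $\pi$ becomes the greatest element of $\tau$, the leftmost point of $\pi$ becomes the least element of $\tau$, and the greatest element of $\pi$ becomes the rightmost point of $\tau$. Writing the extreme points of $\pi$ as $cadb$ with $a<b<c<d$, this identifies $a,b,c,d$ with the leftmost, greatest, least and rightmost extreme points of $\tau$, respectively.

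Finally, I would transport the $N$-shape back to $\pi$. A monotone run reflected across the diagonal stays monotone of the same type (increasing to increasing, decreasing to decreasing), and a contiguous block of positions becomes a contiguous block of values; thus the three position-monotone segments of $\tau$ become three value-monotone segments of $\pi$. Under the identification above these are exactly the values between $a$ and $b$ (increasing, from the segment of $\tau$ running from its leftmost point up to its greatest element), the values between $b$ and $c$ (decreasing, from the segment down to the least element), and the values between $c$ and $d$ (increasing, from the segment up to the rightmost point), which is precisely the claimed $S$-shape. I anticipate no genuine obstacle here, since Proposition~\ref{Prop_2413} already does all of the geometric work; the only point demanding care is the bookkeeping of which extreme point maps to which under inversion, together with the observation that inversion converts the position-based description of the $N$-shape into the value-based description of the $S$-shape.
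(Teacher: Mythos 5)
Your proposal is correct and matches the paper exactly: the paper obtains this proposition with the single remark ``Using the inverse symmetry for $\A$ we also obtain,'' i.e.\ by applying Proposition~\ref{Prop_2413} to $\pi^{-1}$, which is precisely your argument. Your careful bookkeeping of how the four extreme points and the three monotone segments transform under inversion is accurate and simply fills in what the paper leaves implicit.
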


Those two results were really just warm-ups for the general description of the simple permutations in $\A$. We provide part of that description now. Let $\pi$ be a simple permutation of $\A$ (of length at least four) and suppose that 1 does not occur in the second position of $\pi$. Using the inverse symmetry, this can always be assumed: if both $\pi$ and $\pi^{-1}$ had 1 in the second position, then $\pi$ would begin with 21 and hence could not be simple. Now write:
\[
\pi = b \cdots d \cdots a \cdots e \cdots c \cdots
\]
where: $a = 1$, $b$ is the first element of $\pi$, $d$ is the greatest element between $b$ and $a$, $c$ is the rightmost element whose value is less than $d$, and $e$ is the second rightmost element whose value is less than $d$.

Our first claim is that $b \neq d$. Suppose it were the case that $b = d$. Then there are no elements larger than $b$ lying between $b$ and $a$, but by assumption there is some element $x$ between $b$ and $a$. Take the least such. Splitting the box determined by $\{a,x\}$ requires an element $y$ beyond $a$ and between $a$ and $x$ in value. Take the rightmost such. Now in order to split $\{a,x,y\}$ we require an element $z$ between $a$ and $y$ and larger than $x$. In fact $z$ is larger than $b$, else $bxzy \sim 4231$. The pattern of the five elements we have considered until now is $bxazy \sim 43152$. The box formed by $\{b,x\}$ is not split to the left ($b$ is leftmost), above (by assumption), below (by 4231 avoidance), or to the right (by 4231 and 42513 avoidance), so we have a contradiction to simplicity.

\centerline{
%ToPStricks output
\psset{xunit=2pt, yunit=2pt, runit=2pt}
\begin{pspicture}(0,0)(70,70)
%Forbidden regions
%\pspolygon*[linecolor=lightgray](0,0)(10,0)(10,10)(0,10)
%\pspolygon*[linecolor=lightgray](0,10)(10,10)(10,20)(0,20)
%\pspolygon*[linecolor=lightgray](0,20)(10,20)(10,30)(0,30)
%\pspolygon*[linecolor=lightgray](0,30)(10,30)(10,40)(0,40)
%\pspolygon*[linecolor=lightgray](0,40)(10,40)(10,50)(0,50)
%\pspolygon*[linecolor=darkgray](0,50)(10,50)(10,60)(0,60)
%\pspolygon*[linecolor=lightgray](10,0)(20,0)(20,10)(10,10)
\pspolygon*[linecolor=darkgray](10,10)(20,10)(20,20)(10,20)
\pspolygon*[linecolor=darkgray](10,20)(20,20)(20,30)(10,30)
\pspolygon*[linecolor=lightgray](10,40)(20,40)(20,50)(10,50)
\pspolygon*[linecolor=lightgray](10,50)(30,50)(30,55)(10,55)
%\pspolygon*[linecolor=darkgray](10,50)(20,50)(20,60)(10,60)
%\pspolygon*[linecolor=lightgray](20,0)(30,0)(30,10)(20,10)
\pspolygon*[linecolor=lightgray](20,10)(30,10)(30,20)(20,20)
\pspolygon*[linecolor=lightgray](20,20)(30,20)(30,30)(20,30)
\pspolygon*[linecolor=darkgray](20,30)(30,30)(30,40)(20,40)
\pspolygon*[linecolor=lightgray](20,40)(30,40)(30,50)(20,50)
%\pspolygon*[linecolor=darkgray](20,50)(30,50)(30,60)(20,60)
%\pspolygon*[linecolor=lightgray](30,0)(40,0)(40,10)(30,10)
\pspolygon*[linecolor=darkgray](30,30)(40,30)(40,40)(30,40)
%\pspolygon*[linecolor=darkgray](40,0)(50,0)(50,10)(40,10)
\pspolygon*[linecolor=darkgray](40,30)(50,30)(50,40)(40,40)
%\pspolygon*[linecolor=darkgray](50,0)(60,0)(60,10)(50,10)
%\pspolygon*[linecolor=lightgray](50,10)(60,10)(60,20)(50,20)
%\pspolygon*[linecolor=lightgray](50,20)(60,20)(60,30)(50,30)
\pspolygon*[linecolor=darkgray](50,30)(55,30)(55,40)(50,40)
%Points
\pscircle*[linecolor=black](10,40){2.0}
\uput[-45](10,40){$b$}
\pscircle*[linecolor=black](20,30){2.0}
\uput[-45](20,30){$x$}
\pscircle*[linecolor=black](30,10){2.0}
\uput[-45](30,10){$a$}
\pscircle*[linecolor=black](40,50){2.0}
\uput[-45](40,50){$z$}
\pscircle*[linecolor=black](50,20){2.0}
\uput[-45](50,20){$y$}
%Gridlines
%\psline(0,0)(0,60)
%\psline(0,0)(60,0)
\psline(10,10)(10,55)
\psline(10,10)(55,10)
\psline(20,10)(20,55)
\psline(10,20)(55,20)
\psline(30,10)(30,55)
\psline(10,30)(55,30)
\psline(40,10)(40,55)
\psline(10,40)(55,40)
\psline(50,10)(50,55)
\psline(10,50)(55,50)
%\psline(60,0)(60,60)
%\psline(0,60)(60,60)
\end{pspicture}
}

Our second claim is that the elements between the positions of $b$ and $d$ form an increasing sequence. This is proved exactly as in Proposition \ref{Prop_2413} above. We also claim that all these elements, except for $d$, are less than $c$. Otherwise take the leftmost such, $x$. The box $\{d,x\}$ cannot be split below (due to the increasing sequence from $b$ to $d$) so is split to the right. But, as usual, the rightmost such split $y$ gives an interval in the box $\{d,x,y\}$.

Similarly, the elements between $d$ and $a$ in position form a decreasing sequence and all lie below $c$. And again similarly, the elements between $a$ and $c$ in position whose values lie below $c$ form an increasing sequence. The elements between $a$ and $c$ in position which lie below $d$ and above $c$ form a decreasing sequence (from 4231 avoidance alone). Moreover, none of these elements can lie to the left of any element $x$, between $a$ and $c$ of value less than $c$, and also less than the value of some $y$ (except $d$) lying between $b$ and $a$: such elements would give either a 4231 if $y$ occurred after $d$, or a 35142 if it occurred before.

There are no elements between $e$ and $c$ whose value lies below $d$, so the box $\{c,e\}$ must be split by some element $x$ larger than $d$. Taking the least such $x$ it follows from avoidance conditions that all elements between $a$ and $e$ lie below $x$. The usual argument implies that these elements form an increasing sequence. At this point we have identified a significant part of $\pi$ as shown in Figure \ref{FIG_FirstPart}.

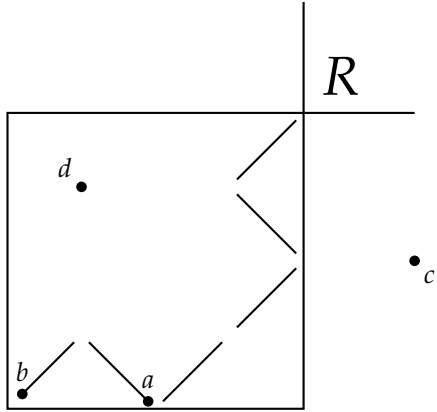
\begin{figure}
\centerline{
\psset{xunit=1.4pt, yunit=1.4pt, runit=1pt}
\begin{pspicture}(0,0)(120,120)
\psline(4,4)(18,18)
\psline(22,18)(38,2)
\psline(42,2)(58,18)
\psline(62,22)(78,38)
\psline(78,42)(62,58)
\psline(62,62)(78,78)
\pscircle*[linecolor=black](4,4){2.0}
\uput[90](4,4){$b$}
\pscircle*[linecolor=black](38,2){2.0}
\uput[90](38,2){$a$}
\pscircle*[linecolor=black](20,60){2.0}
\uput[135](20,60){$d$}
\pscircle*[linecolor=black](110,40){2.0}
\uput[-45](110,40){$c$}
\pspolygon(0,0)(80,0)(80,80)(0,80)
\psline(80,110)(80,80)(110,80)
\rput(90,90){\huge $R$}
\end{pspicture}
}
\caption{The first part of the form of a simple permutation in $\A$ with extreme pattern $2143$.}
\label{FIG_FirstPart}
\end{figure}

We refer now to Figure \ref{FIG_FirstPart}. In this figure, the outlined region in the lower left will be referred to as ``the lower box''. The element $e$ might be either just below or just above $c$. It is split from $c$ by either the uppermost element of the lower box, or (at least) the first element in the area marked $R$. The increasing sequence in the upper right corner of the lower box consists of all the remaining elements that lie to the left of $e$, and possibly the element immediately to the right of $e$ if it should happen to be the least element to the right of $e$ other than $c$. The remainder of $\pi$ lies entirely within the area marked $R$.

Still with reference to Figure \ref{FIG_FirstPart} we now remove the entire section of $\pi$ in the lower box, except for the point $d$. We claim that the remaining permutation $\pi'$ is still simple. For suppose that it had a proper interval, $I$. If this interval included $c$ then it would also include $d$. Then it, and all the elements we removed, would be a proper interval of $\pi$. If $I$ included $d$ but not $c$ then it would include the leftmost element in the region marked $R$. But $R$ cannot begin with its least element (otherwise, it would either have been added to the lower box, or we would already have added this element -- then the element we added and the first element of $R$ would have been an interval in $\pi$). So $I$ contains at least two points of $R$ and $I \setminus \{d\}$ is an interval of $\pi$. Finally, if it included neither $c$ nor $d$ then it is an interval of $\pi$. Since each case leads to a contradiction, we cannot have a proper interval in $\pi'$.

This decomposition allows us to continue the description of the simple permutations in $\A$. Either what is left over is covered by one of the earlier propositions (specifically, the extreme points would have to have pattern 2413), or we can construct another box like the one we have just built (with $d$ and $c$ taking on the roles of $b$ and $a$ respectively). This establishes:

\begin{figure}
\centerline{
\psset{xunit=1.5pt, yunit=1.5pt, runit=2pt}
\begin{pspicture}(-20,-30)(180,160)
\def\singleBox{%
\psline(0,0)(9.5,9.5)
\psline(10.5,9.5)(19.5,0.5)
%\psline(20.5,0.5)(39.5,19.5)
\psline(20.5,0.5)(29.5,9.5)
\psline(30.5,10.5)(39.5,19.5)
\psline(39.5,20.5)(30.5,29.5)
\psline(30.5,30.5)(39.5,39.5)
\pscircle*(10,30){1}
\pscircle*(60,20){1}
}
\multips{0}(0,0)(40,40){4}{\singleBox}
\pscircle*(-30,-10){1}
\pscircle*(20,-20){1}
%\psline(-9.5,-9.5)(0,0)
\uput[-135](57,48){\small $A$}
\uput[-135](77,68){\small $B$}
\uput[-45](-30,-10){\footnotesize 1}
\uput[-45](20,-20){\footnotesize 2}
\uput[-45](10,30){\footnotesize 3}
\uput[-45](60,20){\footnotesize 4}
\uput[-45](50,80){\footnotesize 5}
\uput[-45](100,60){\footnotesize 6}
\uput[-45](90,120){\footnotesize 7}
\uput[-45](140,100){\footnotesize 8}
\uput[-45](130,160){\footnotesize 9}
\uput[-45](180,140){\footnotesize 10}
%\uput[90](2,1){\footnotesize $\mathrm a$}
%\uput[90](18,1){\footnotesize $\mathrm b$}
%\uput[90](30,1){\footnotesize $\mathrm c$}
%\uput[90](30,11){\footnotesize $\mathrm a$}
\pspolygon*[linecolor=white](150,130)(160,130)(160,160)(150,160)
\end{pspicture}
}
\caption{The form of a simple permutation with extreme pattern 2143 in $\A$. Simple permutations whose extreme pattern is 2413 can also be interpreted as a special case of this form. The ``isolated'' (numbered) points must be present, and define the shape of the rest of the permutation, which we will call the \emph{crenellation}. The crenellation can consist of arbitrarily many line segments, and each segment can contain any number of elements, but these must be placed in such a way as to avoid creating intervals. The labels $A$ and $B$ mark the two different types of \emph{sides} of the crenellation and are referred to in the proof of Theorem \ref{THM_InClass}. The numbers on the isolated points indicate the order in which they are encoded in the representation of the simple permutations using a regular language.}
\label{FIG_Crenellation}
\end{figure}
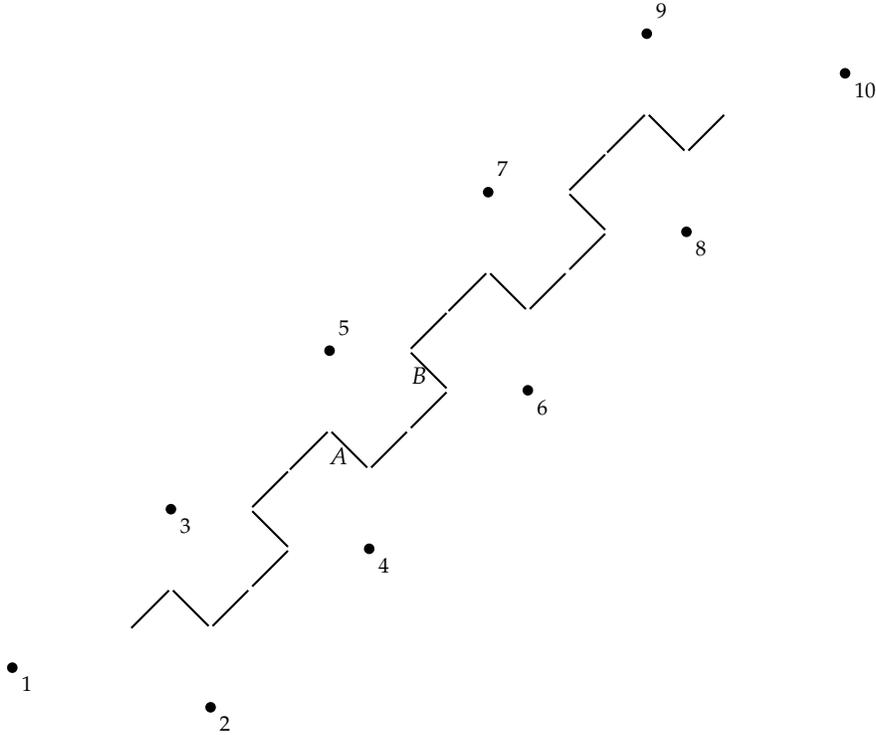

\begin{theorem}
\label{THM_InClass}
Let $\pi \in \A$ be simple and suppose that $\pi(2) \neq 1$. Then $\pi$ has a decomposition of the form described in the caption of Figure \ref{FIG_Crenellation} and illustrated there for a particular case. Moreover, every simple permutation of this form lies in $\A$.
\end{theorem}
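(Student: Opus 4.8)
The statement has two halves, and I would treat them separately. The first half --- that every simple $\pi \in \A$ with $\pi(2)\neq 1$ decomposes into the crenellated staircase of Figure \ref{FIG_Crenellation} --- is essentially an induction that packages the box-building analysis already carried out in the paragraphs preceding the theorem. The second half --- that \emph{every} permutation of this form actually lies in $\A$ --- is where the genuinely new work lies, since it requires verifying that the form avoids all four basis patterns.

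\emph{Forward direction.} The plan is to induct on the length of $\pi$ (equivalently, on the number of boxes). The base case is the one already handled by Proposition \ref{Prop_2413}: if the leftover permutation has extreme pattern $2413$ it is $N$-shaped, which, as noted in the caption of Figure \ref{FIG_Crenellation}, is the degenerate one-box instance of the crenellation. For the inductive step I would invoke the construction built up before the theorem: having located $b,d,a,e,c$ and the lower box, we delete everything in the lower box except $d$ and appeal to the fact, already proved, that the resulting $\pi'$ is again simple and lies in $\A$ (being a subpermutation). Since $\pi'$ is shorter, by induction it has crenellated form, now with $d$ and $c$ playing the roles previously played by $b$ and $a$. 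Re-attaching the deleted lower box prepends one more box of the required shape, and the monotonicity and value constraints established earlier guarantee that the reassembled picture is exactly a crenellation. The only thing to check with care is that the interface between the new box and the inductively obtained tail respects the two side-types $A$ and $B$, i.e.\ that no interval is created at the join; this follows from the same splitting arguments used throughout.

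\emph{Converse direction.} Here I would argue by contradiction: suppose a crenellation-form permutation $\pi$ contains one of $4231,\,35142,\,42513,\,351624$, and fix an occurrence. The governing structural fact is that the boxes are stacked along the main diagonal so that every point of a later box lies both to the right of and above every point of an earlier box; consequently any two points in relative order $21$ (an inversion) must lie in a common decreasing segment of a single box --- one of the sides marked $A$ or $B$. I would then classify the points of the hypothetical occurrence according to which increasing segment, decreasing segment, or isolated point they occupy, using the $A/B$ labels to organise the cases. For $4231$, the extreme pair ``$4\cdots1$'' is itself an inversion spanning the whole pattern, forcing all four points into a bounded window of the staircase where the monotone segments leave no room for the internal ascent ``$23$''. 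Each of $35142$, $42513$ and $351624$ instead contains two descents that are interleaved in value, and I would show that realising both simultaneously would require one decreasing segment to sit to the right of another while dipping into or below its value range --- a nesting the rising staircase forbids.

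\emph{Main obstacle.} I expect the converse to be the hard part, and within it the length-five and length-six patterns, because these span more than one box and their occurrences interact with the overlap regions (the $A$ and $B$ sides) where consecutive boxes meet and where the ``avoid creating intervals'' constraint is delicate. A cleaner route, which I would attempt first, is to observe that the entire crenellation is contained in a single infinite staircase grid class $\G$ whose cells are alternately increasing and decreasing. Since grid classes are themselves pattern classes (closed downwards), it would then suffice to verify once and for all that $\G$ avoids the four basis patterns, whence the containment of the crenellation in $\A$ is immediate. This collapses the whole converse into one finite check on the grid, at the cost of first pinning down the correct grid matrix so that $\G \subseteq \A$ genuinely holds.
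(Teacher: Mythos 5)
Your forward direction is essentially the paper's: the peeling argument (locate $b,d,a,e,c$, build the lower box, delete it except for $d$, check the remainder is simple, recurse until Proposition~\ref{Prop_2413} applies) is exactly what the paper does in the paragraphs before the theorem, and the paper's proof of the first half simply points back to them. The problem is your converse, and specifically the ``governing structural fact'' on which your whole case analysis rests: it is false that every inversion of a crenellated permutation lies inside a single decreasing side of one box. The isolated points themselves form inversions with one another (in Figure~\ref{FIG_Crenellation}, point $1$ precedes point $2$ in position but exceeds it in value, and likewise $3$ and $4$, $5$ and $6$, \dots); an isolated point sitting above a box inverts with every element of the decreasing sides to its right and below it; and even within one box, a point on an increasing segment inverts with later, lower points on the adjacent decreasing side. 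Consequently your reduction of a putative $4231$ to ``a bounded window'' collapses: the $4$ and the $1$ of an occurrence can perfectly well be isolated points arbitrarily far apart in the staircase, and the same failure infects your treatment of the length-five and length-six patterns. The paper's actual argument is organised around a different (and correct) observation: the top isolated points and the crenellation tops are left-to-right maxima, the bottom isolated points and crenellation bottoms are right-to-left minima, and this constrains which \emph{roles} such elements can play in each forbidden pattern (e.g.\ neither the $2$ nor the $3$ of a $4231$ can be an LR maximum or an RL minimum, so both must come from sides, necessarily different sides, and then no element lies simultaneously above and before both, or below and after both). The longer patterns are then dispatched by a case analysis on the side types $A$ and $B$, with reverse-complement symmetry halving the work.

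Your proposed ``cleaner route'' via a staircase grid class does not rescue this as stated. The grid in question is infinite --- the paper's conclusion explicitly notes that this is why the existing grid-class machinery cannot be applied directly --- so ``one finite check on the grid'' is not automatic: you would need to (a) specify a gridding that accounts for the isolated points, which do not sit in the monotone cells and which interleave in both position and value with several cells at once, and (b) prove a periodicity lemma to the effect that any occurrence of a pattern of length at most six meets a bounded window of the staircase, so that a finite verification suffices. Step (b) is plausible and could be made to work, but it is precisely the missing content, not a corollary of known results; as written, the converse half of your proof has no complete argument for any of the four basis patterns.
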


\begin{proof}
The first part of the result is established in the previous paragraphs. To establish the second part we need to show that every permutation, $\pi$, of this form belongs to $\A$, i.e.~avoids all of 4231, 35142, 42513 and 351624.

Owing to its suggestive shape, we will refer to the set of points in a simple permutation other than the isolated points in Figure~\ref{FIG_Crenellation} as the \emph{crenellation}. We further subdivide the crenellation into the \emph{top} and \emph{bottom} (being the double-length line segments), and the \emph{sides} (the shorter line segments).

Note that the top row of dots, and the top of each crenellation are left to right maxima, while the bottom row of dots and the bottom of each crenellation are right to left minima. In a purported 4231 pattern therefore, none of these can play the role of 2 or 3. So that role must be played by two elements from the sides of a crenellation, and from different sides since an individual side is decreasing. But, two such elements have either no larger element before both of them, or no smaller element after.

Similarly, in a purported 35142 pattern, the 4 must come from a side of a crenellation. But, if it comes from a side of type $A$ there is no possibility for the 514 part of the pattern, since all the larger elements before it come after all the smaller elements before it. Similarly, if it comes from a side of type $B$ there is no possibility for $342$ as all the elements smaller than it and after it are larger than the elements smaller than it and before it. By reverse-complement symmetry, the pattern 42513 cannot occur either.

Finally, suppose there were a 351624 pattern. First we check that the 3 could come from the side of a crenellation. It is immediate that it could not be of type $A$ as the following smaller elements have no larger elements between them. Suppose that it came from a side of type $B$. In order to leave room for the 1, the 5 would have to come from the first half of the immediately following crenellation top -- but then there is no possible 4. A similar case by case analysis establishes that all of 3, 5 and 1 (and hence by reverse-complement symmetry also 6, 2 and 4) must come from the isolated elements and the top and bottoms of the crenellations. Consider now the 3. Since this is a left to right maximum it must come from an isolated point in the top group or the top of a crenellation. The latter is impossible, because all the smaller elements (that remain) form an interval by position, so we cannot achieve the 162 part. On the other hand, taking one of the top isolated points will force us to take 56 in the first half of the next top segment (in order to bracket the 1 and be followed by the 2), and this prevents the addition of a 4.
\end{proof}

\section{Enumerating $\A$}

In order to enumerate $\A$ we use the result of the previous section to first enumerate the simple permutations of $\A$. Note that by Theorem~\ref{THM_InClass} any simple $\pi\in \A$ falls into one of two classes: either the leftmost point of $\pi$ is 2, or $\pi(2)=1$. By inverse symmetry these two classes are equinumerous, so we will enumerate exactly half of the simples in $\A$ by considering only those for which $\pi(2)\neq 1$.

First, we need to refine the viewpoint shown in Figure \ref{FIG_Crenellation} somewhat. The reason is that, while every simple permutation $\pi\in\A$ with $\pi(2)\neq 1$ is of this form, and every permutation of this form lies in $\A$ (as established by Theorem \ref{THM_InClass}), it is not the case that every permutation of this form is simple.

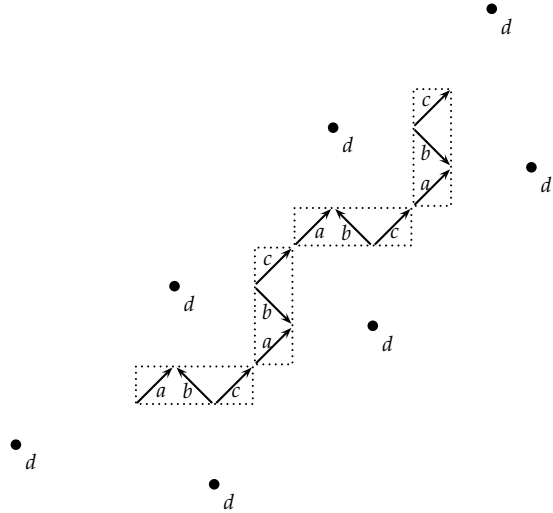
\begin{figure}
\centerline{
\psset{xunit=1.5pt, yunit=1.5pt, runit=2pt}
\begin{pspicture}(-20,-30)(100,100)
\def\singleBox{%
\psline{->}(0.5,0.5)(9.5,9.5)
\psline{<-}(10.5,9.5)(19.5,0.5)
%\psline(20.5,0.5)(39.5,19.5)
\psline{->}(20.5,0.5)(29.5,9.5)
\psline{->}(30.5,10.5)(39.5,19.5)
\psline{<-}(39.5,20.5)(30.5,29.5)
\psline{->}(30.5,30.5)(39.5,39.5)
\psframe[linestyle=dotted,dotsep=1.5pt,fillstyle=none](0,0)(30,10)
\psframe[linestyle=dotted,dotsep=1.5pt,fillstyle=none](30,10)(40,40)
\pscircle*(10,30){1}
\pscircle*(60,20){1}%
}
\multips{0}(0,0)(40,40){2}{\singleBox}
\pscircle*(-30,-10){1}
\pscircle*(20,-20){1}
\pscircle*(90,100){1}
%\psline(-9.5,-9.5)(0,0)
\uput[-45](3,7){\footnotesize $a$}
\uput[-135](17,8){\footnotesize $b$}
\uput[-45](22,7){\footnotesize $c$}
\uput[135](37,12){\footnotesize $a$}
\uput[-135](37,28){\footnotesize $b$}
\uput[135](37,33){\footnotesize $c$}
\uput[-45](43,47){\footnotesize $a$}
\uput[-135](57,48){\footnotesize $b$}
\uput[-45](62,47){\footnotesize $c$}
\uput[135](77,52){\footnotesize $a$}
\uput[-135](77,68){\footnotesize $b$}
\uput[135](77,73){\footnotesize $c$}
\uput[-45](-30,-10){\footnotesize $d$}
\uput[-45](20,-20){\footnotesize $d$}
\uput[-45](10,30){\footnotesize $d$}
\uput[-45](60,20){\footnotesize $d$}
\uput[-45](50,70){\footnotesize $d$}
\uput[-45](100,60){\footnotesize $d$}
\uput[-45](90,100){\footnotesize $d$}
%\uput[90](2,1){\footnotesize $\mathrm a$}
%\uput[90](18,1){\footnotesize $\mathrm b$}
%\uput[90](30,1){\footnotesize $\mathrm c$}
%\uput[90](30,11){\footnotesize $\mathrm a$}
\pspolygon*[linecolor=white](150,130)(160,130)(160,160)(150,160)
\end{pspicture}
}
\caption{The encoding used to describe the simple permutations of $\A$. The three line segments within each dotted box correspond to a block encoded by a contiguous factor in $\{\a,\b,\c\}$.}
\label{FIG_Encoding}
\end{figure}

We seek an encoding into a language over a finite alphabet in which the simple permutations of $\A$ form a regular language. By standard techniques we will then enumerate this language (using separate variables for each letter), and then the allowed inflations of each point will provide an enumeration of $\A$.

The encoding uses a four letter alphabet, $\Sigma = \{\a, \b, \c, \d \}$. The letter $\d$ will be used to encode each of the isolated points, in the order illustrated in Figure~\ref{FIG_Crenellation}. We divide the crenellation into \emph{blocks} of three line segments at a time, as shown in Figure~\ref{FIG_Encoding}. There are two types of block: $N$-shaped, where the three line segments interact by value, and $S$-shaped, where they interact by position. Note that no two distinct blocks interact either by value or position; the only interactions that a given block participates in are with itself and exactly two isolated points. In the case of $N$-shaped blocks one of these isolated points lies below and one lies above the block, while for $S$-shaped blocks the isolated elements lie to the left and to the right. Additionally, note that all of these interacting isolated points must be present (otherwise the permutation being encoded is sum decomposable), and that it is possible for a block to contain no points.

We encode each block using the three letters $\a$, $\b$ and $\c$, with $N$-shaped blocks being encoded from bottom to top, and $S$-shaped blocks encoded from left to right, as illustrated in Figure~\ref{FIG_Encoding}. For a word $w$ which encodes a given simple permutation in the class, each block is encoded by a contiguous factor of $w$ lying between two consecutive occurences of $d$ in $w$. So, within an $N$-shaped block for instance, the first letter of this factor indicates the type of the least element of the block, the second letter the type of the second least element, \dots, and the last letter the type of the greatest element of the block. Immediately preceding this contiguous factor is the letter $\d$ corresponding to the isolated point below the block (in the case of $N$-shaped) or (for $S$-shaped) to the left of the block, and the factor is immediately followed by the letter $\d$ encoding the isolated point above or to the right. Thus, each successive $\d$ triggers a transition from one block to the next (alternating between $N$- and $S$-shaped blocks).

By our choice that $\pi(2)\neq 1$, the first block to be encoded is $N$-shaped and is preceded by two points, both of which we encode by $\d$. This ensures uniqueness of the encoding, as the leftmost point could equivalently be encoded by $\a$ as a member of the first $N$-shaped block. Thus, the word begins $\d\d$, and similarly it must also end $\d\d$. Note, however, that the final block to be encoded can be either $N$- or $S$-shaped, and this is determined by the number of $d$s occurring in the word.

Finally, we claim that the following conditions are both necessary and sufficient to guarantee that a word, $w$, encodes a simple permutation, and that this encoding is unique:
\begin{itemize}
\item
$w$ must begin and end $\d\d$,
\item
$w$ must contain no $\a\a$, $\b\b$ or $\c\c$ factors,
\item
$w$ cannot begin $\d\d\a$ or end $\c\d\d$,
\item
a block cannot begin with $\a$ (i.e.\ the factor $\d\a$ is forbidden).
\end{itemize}

To justify this claim, first observe that, since all elements encoded by $\d$ must be present, there are only two possible ambiguities: that the first and last points may be encoded by $\a$ or $\d$ and $\c$ or $\d$ respectively, and that the first point of a block whose encoding begins with $\a$ could equivalently be encoded by $\c$ in the preceding block. Both of these ambiguities are excluded in the above conditions, and therefore each permutation is encoded uniquely.

We now show that all permutations encoded in this way are simple. Suppose, for a contradiction, that $\pi$ is encoded by a word of the form specified above, but that $\pi$ contains a proper non-singleton interval $I$. First, $I$ cannot contain more than one point encoded by $\d$ as otherwise $I$ must contain all points encoded by $\d$, and consequently $I$ must contain all points of $\pi$.

Next, if $I$ contains no points encoded by $d$, then $I$ consists entirely of points within one line segment of the crenellation: since the repeated factors $\a\a$, $\b\b$ and $\c\c$ are forbidden, the only possibility is that $I$ consists of the last point (encoded by $c$) from one block, and the first point (encoded by $a$) from the next. This, however, is impossible, since no block can begin with the letter $\a$.

Finally, suppose $I$ contains exactly one point encoded by $\d$. Unless this point is the first or the last point of the encoding, then $I$ cannot contain any other points, otherwise it would necessarily contain another point encoded by $\d$. If $I$ contains the first point of $\pi$, then the remaining points of $I$ must come from the first line segment of the first block, all of which are encoded by $\a$. However, this first block cannot begin with the letter $\a$, and so $I$ contains no such points. Similarly, if $I$ contains the final encoded point, then the only other points that can be contained in $I$ are encoded by $\c$ in the final block, but the final block is not allowed to finish with the letter $\c$, so this too is impossible.

Thus, the simple permutations in $\A$ with leftmost element equal to 2 are encoded by a regular language. Because the rules that specify this language are so uncomplicated it is easy to describe an automaton that recognises it.  One such is described by the transition table below, with initial state $I$, and unique accepting state $DD$ (the state names reflect a suffix of the sequence processed to reach them, which is sufficiently long to ensure that none of the conditions are violated):\\
\centerline{
\begin{tabular}{c|cccc}
&$\a$&$\b$&$\c$&$\d$\\\hline
 $I$&&$B$&$C$&$D$\\
$A$&&$B$&$C$&$D$\\
$B$&$A$&&$C$&$D$\\
$C$&$A$&$B$&&$CD$\\
$CD$&&$B$&$C$&$D$\\
$D$&&$B$&$C$&$DD$\\
$DD$&&$B$&$C$&$DD$\\\hline
\end{tabular}}

From this automaton the generating function for the simple permutations, $\pi$, of $\A$ with $\pi(2) \neq 1$ can easily be computed using the transfer matrix approach:
\[
\frac{x^4}{(1-3x)(1+x)}.
\]
Thus, the generating function for the simples in $\A$ is twice this. In the enumeration of the entire class we will not actually use this generating function, because the allowed inflations of points of these simple permutations depend on which letter they were encoded by. For those purposes we need a more detailed form of the generating function of the language accepted by the automaton which treats the letters of the alphabet as distinct commuting variables:
\begin{equation}
\label{EQ_Automaton_GF}
 {\frac {{d}^{4} \left( 1+b \right) }{1 - 2\,abc-ac-ab-bd-bc-bcd-cd-d}}.
\end{equation}

\subsection{Completing the enumeration}

To complete the enumeration, we need to establish how points of the simple permutations can be inflated. Appealing to Figure~\ref{FIG_Encoding}, we first observe that every point which is encoded by $\b$ can only be inflated to $\Av(21)$, i.e.~an increasing sequence, else a 4231 pattern would be created by using suitable points encoded by $\d$.

Next, in order to avoid 4231, every point encoded by $\a$ or $\c$ can be inflated by a permutation that avoids either 231, or 312, depending on whether it lies above and left, or below and right (respectively) of a point encoded by $\d$. In either case, this is the only restriction, since all the other basis elements of the class $\A$ contain both 231 and 312.

By a similar argument, points encoded by $\d$ which lie above the crenellation can be inflated by any permutation from $\Av(312)$, and those below by any permutation in $\Av(231)$. Both of these classes are enumerated by the Catalan numbers, whose generating function (excluding the empty permutation) is
\[
f_{cat} = \frac{1-2x-\sqrt{1-4x}}{2x}.
\]

Consequently, by substituting $f_{cat}$ in place of every letter $\a$, $\c$ or $\d$, and $x/(1-x)$ in place of every $\b$ in expression (\ref{EQ_Automaton_GF}) then doubling (to take account of the symmetry), the generating function for the inflations of the simple permutations of length four or more in $\A$ is
\[
f^s_\A = \frac{(1-2x-\sqrt{1-4x})^4}{8x^2(x^2+3x-1+\sqrt{1-4x}-x\sqrt{1-4x})}.
\]

We now turn our attention to the enumeration of the skew decomposable permutations in $\A$. If $\pi\in\A$ is skew decomposable, then $\pi=21[\pi_1,\pi_2]$ where to ensure a unique representation we take $\pi_1$ to be skew indecomposable. Because of the basis element 4231, $\pi_1\in\Av(312)$, and $\pi_2\in\Av(231)$ (and moreover these necessary conditions are sufficient to guarantee $\pi \in \A$, as the remaining basis elements of $\A$ are skew indecomposable and involve both 312 and 231). The generating function for the skew indecomposables in $\Av(312)$ is given by $f_{cat}(1-x)$, and so the generating function for the skew decomposable permutations of $\A$ is
\[
f^{\ominus}_\A = f^2_{cat}(1-x).
\]

Finally, we observe (by inspecting the basis elements) that for any $\alpha, \beta \in \A$, $12[\alpha, \beta] \in \A$. In particular any member of $\A$ is a sum of a unique sum indecomposable member of $\A$ and another (possibly empty) member of $\A$. Since the sum indecomposables are: 1, inflations of 21, and inflations of simple permutations of length at least four we obtain:
\[
f_{\A} = (x + f^{\ominus}_\A + f^{s}_{\A})(1 + f_{\A})
\]
where $f_{\A}$ is the generating function for the class $\A$. Using the relations above and solving for $f_{\A}$, we have
\[
f_\A = \frac{1-3x-2x^2-(1-x-2x^2)\sqrt{1-4x}}{1-3x-(1-x+2x^2)\sqrt{1-4x}}.
\]

The sequence of coefficients for this generating function begins 1, 2, 6, 23, 101, 477, 2343, 11762, 59786, 306132, \dots  \OEIS{A213090}

\section{Conclusions}

As noted in the introduction, the structure of the argument used to eventually determine the generating function of $\A$ owes a great deal to a sequence of results concerning simple permutations and ``grid classes''. The interested reader may wish to consult \cite{albert:geometric-grid-:, brignall:a-survey-of-sim:} and references therein. The papers \cite{albert:the-enumeration:2143:4231, albert:counting-1324-4:} contain other examples of the general style of argument that we have used here. It is worth noting however that the ``grid'' illustrated in Figure \ref{FIG_Crenellation} is an infinite one, so the results mentioned above cannot be applied directly to this problem. However, it does seem that they could still be profitably applied for instance to other permutation classes such as those mentioned in \cite{ulfarsson:which-schubert:}.

The reversals of the permutations considered in this paper also appear (conjecturally) in yet another context related to $q$-enumerations of certain classes of matrices. This connection is described in \cite{2012arXiv1203.5804K}.

Exploration of the structure of simple permutations in $\A$ given their extreme pattern was greatly facilitated by the use of \emph{PermLab} (\url{http://www.cs.otago.ac.nz/PermLab}). The derivation of the generating functions for $\A$ and the associated automaton was carried out in \emph{Maple}. The authors would like once again to thank Alexander Woo for bringing the problem of enumerating $\A$ to their attention at this time, and also Masaki Ikeda and Sara Billey who noted typographical errors in the arXiv version. Finally, we would like to thank an attentive and generous referee. It seems that the first author's expressed view at FPSAC 2008 that, given the conjectured generating function its confirmation was a ``moderate exercise'' proved to be somewhat premature.

\bibliographystyle{model1-num-names}
\bibliography{refs}

\end{document}